\documentclass[letterpaper, 10 pt, conference]{ieeeconf}  
\IEEEoverridecommandlockouts                              
\usepackage{cite}
\usepackage{amsmath,amssymb,amsfonts}
\usepackage{amsthm}
\usepackage{graphicx}
\usepackage{textcomp}
\usepackage{booktabs}
\usepackage{array}

\DeclareRobustCommand{\gd}{\dot{\vphantom{t}g}}

\theoremstyle{plain}
\newtheorem{theorem}{Theorem}
\newtheorem{lemma}{Lemma}
\newtheorem{proposition}{Proposition}
\newtheorem{corollary}{Corollary}
\theoremstyle{definition}
\newtheorem{assumption}{Assumption}
\newtheorem{definition}{Definition}
\theoremstyle{remark}

\AtBeginDocument{%
  \setlength{\abovedisplayskip}{2pt plus 1pt minus 1pt}%
  \setlength{\belowdisplayskip}{2pt plus 1pt minus 1pt}%
  \setlength{\abovedisplayshortskip}{1pt plus 1pt}%
  \setlength{\belowdisplayshortskip}{1pt plus 1pt}}
\makeatletter
\def\thm@space@setup{\thm@preskip=1pt plus 1pt minus 0.5pt \thm@postskip=\thm@preskip}
\renewenvironment{proof}[1][\proofname]{\par\pushQED{\qed}\normalfont\topsep2\p@\@plus1\p@\relax
  \trivlist\item[\hskip\labelsep\itshape#1\@addpunct{.}]\ignorespaces}{\popQED\endtrivlist\@endpefalse}
\def\section{\@startsection{section}{1}{\z@}{1.2ex plus 1ex minus 0.3ex}%
{0.6ex plus .5ex minus 0ex}{\normalfont\normalsize\centering\scshape}}%
\def\subsection{\@startsection{subsection}{2}{\z@}{0.8ex plus 1ex minus 0.3ex}%
{0.4ex plus .3ex minus 0ex}{\normalfont\normalsize\itshape}}%
\AtBeginDocument{\if@filesw\immediate\write\@auxout{\string\bibstyle{plain}}%
  \immediate\write\@auxout{\string\bibdata{refs}}\fi}
\makeatother

\title{\LARGE \bf
On Exact Critical-Region Boundaries in Continuous-Time\\ Multiparametric Optimal Control Problems
}

\author{Lida Lamakani and Efstratios N. Pistikopoulos
\thanks{This work was supported by the Texas A\&M Energy Institute, with partial support from the National Science Foundation through the GOALI (CBET-2401564) and RETRO (CBET-2312457) awards.}
\thanks{The authors are with the Texas A\&M Energy Institute and the Artie McFerrin Department of Chemical Engineering, Texas A\&M University, College Station, TX 77843, USA
        {\tt\small lida.lamakani@tamu.edu, stratos@tamu.edu}}%
}

\begin{document}
\maketitle
\thispagestyle{empty}
\pagestyle{empty}
\raggedbottom
\suppressfloats[t]

\begin{abstract}
Continuous-time multiparametric model predictive control solves the optimal control problem once, offline, for every initial state, and divides the initial-state space into critical regions, each with its own control law and sequence of active constraints. A boundary between two regions is either a hyperplane, given by one linear equation, or curved. The curved boundaries have not been computed to date; a classifier has been trained instead to distinguish the regions, so the partition has been known only approximately. In this paper we show that a curved boundary can be computed exactly, without solving any optimal control problem. The method rests on one property. Once the sequence of active constraints is fixed, together with the instants inside it at which a constraint becomes active or inactive (the switching times) and the instant at which the change of the neighbouring region first appears (the event time), every condition on a boundary is a linear equation in the initial state, so the computation of a boundary, curved or not, is exact. We fix one of these instants, a switching time or the event time, and solve the linear equations that correspond to it for the corresponding initial state on the boundary; stepping the fixed instant along the horizon yields the whole boundary. A hyperplane is the case in which no instant is left to fix. We illustrate the method on the Newell--Lee evaporator, which has seven critical regions and eight boundaries, four hyperplanes and four curved, computing one boundary of each type in full with the adjacent regions at its ends.
\end{abstract}

\begin{keywords}
\hyphenpenalty=10000 \exhyphenpenalty=10000 \sloppy
explicit model predictive control; multiparametric programming; continuous-time systems;
critical regions; Pontryagin's minimum principle.
\end{keywords}

\section{Introduction}

Multiparametric model predictive control computes the optimal control law offline, as an explicit function of the initial state, and divides the initial-state space into critical regions with one control law each \cite{bemporad2002,pistikopoulos2020}. In discrete time the regions are polytopes whose number grows rapidly with the number of sampling intervals \cite{bemporad2002,pistikopoulos2020,biegler2007}; the continuous-time formulation avoids this growth \cite{sakizlis2005,sun2016,lamakani2026a,lamakani2026c}. There, by Pontryagin's minimum principle (PMP), the optimal input follows a sequence of arcs \cite{srinivasan2003}, and on each arc the set of active constraints does not change. A critical region is the set of initial states whose solution follows the same sequence of arcs. We call this sequence the \emph{label} of the region and write it as the list of the active constraints, so $\{3\}\{\}$ means two arcs, constraint~3 active on the first and no constraint on the second. The instants at which one arc ends and the next begins are the \emph{switching times}, an entry time where a constraint becomes active and an exit time where it is released. Inside a region they vary with the initial state; only the label is fixed.

In continuous time the regions need not be polytopes. A boundary is where one arc appears or vanishes. The switching times enter this condition nonlinearly, so the boundary is in general curved. For this reason the boundaries were not computed in \cite{sun2016,sun2019}; a classifier was trained instead to distinguish the regions. We showed in \cite{lamakani2026b} that with one input bound and a single switch every boundary is a hyperplane, and in \cite{lamakani2026d} that the type of a boundary, hyperplane or curved, can be read from the labels of its two neighbouring regions before any computation.
\suppressfloats[t]

\begin{figure*}[t]
\centering
\includegraphics[width=0.9\textwidth]{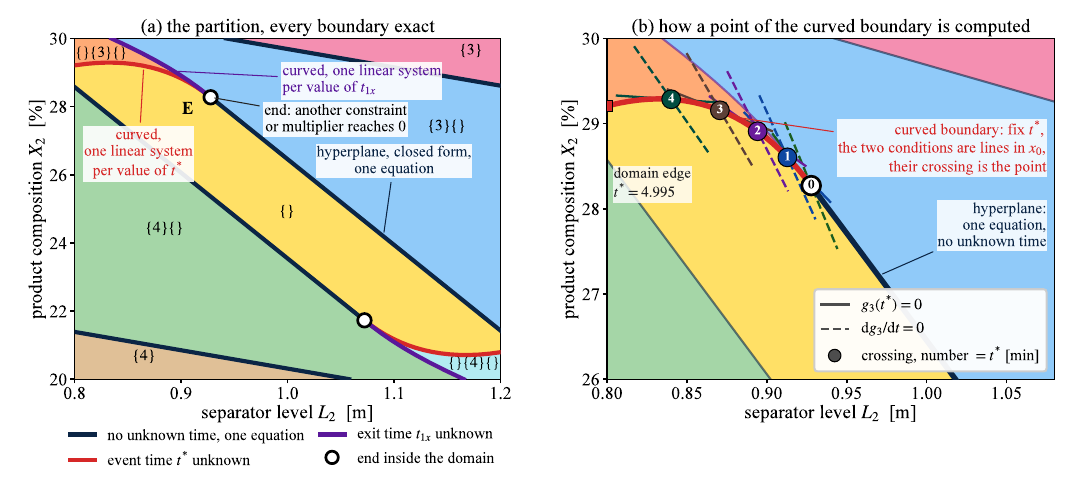}
\caption{(a) The seven critical regions of the evaporator and the eight boundaries, four hyperplanes, in closed form, and four curved, each by one linear system per value of the exit time $t_{1x}$, the instant at which constraint~3 is released, or of the event time $t^{*}$, the instant at which constraint~3 touches zero. (b) For each fixed $t^{*}$ the conditions $g_{3}(t^{*})=0$ (solid) and $\gd_{3}(t^{*})=0$ (dashed) are two lines, whose crossing is one exact point of the boundary. Both boundaries meet at the white point E, $t^{*}=0$.}
\label{fig:map}
\end{figure*}

The goal of this paper is to compute one boundary exactly, hyperplane or curved, to determine its two end points, and to identify the neighbouring regions at each end, without solving an optimal control problem. The key idea is how a boundary is computed. The initial state remains the parameter, and the regions and their boundaries are sets in the space of initial states, as in Fig.~\ref{fig:map}a. In the traditional multiparametric approach a boundary is obtained by solving equations in the initial state. In continuous time the equations of a boundary involve both the initial state and the switching times, and one of the two has to be fixed to solve for the other. They are linear in the initial state and nonlinear in the switching times (Lemma~\ref{lem:affine}). Fixing the initial state, as in \cite{sakizlis2005}, leaves nonlinear equations in the switching times, solved by iteration; a boundary is then located by searching over initial states and is known only approximately. In this paper we fix the switching time or event time instead; the equations that remain are linear in the initial state, and their solution, by linear algebra alone, is the exact point of the boundary at that time. Stepping the time along the horizon traces the whole curved boundary (Fig.~\ref{fig:map}b), so a curved boundary costs a sequence of small linear systems and no optimal control problem. For a hyperplane no time has to be fixed, and the one linear equation is the boundary itself.

Section~\ref{sec:walk} computes a boundary in five steps, each carried out on the evaporator of Fig.~\ref{fig:map} and then stated in general. Fig.~\ref{fig:steps} places these steps, boxes C and D, in the procedure that builds the partition \cite{lamakani2026d}, which is left to a companion paper.

\begin{figure}[tb]
\centering
\includegraphics[width=\columnwidth]{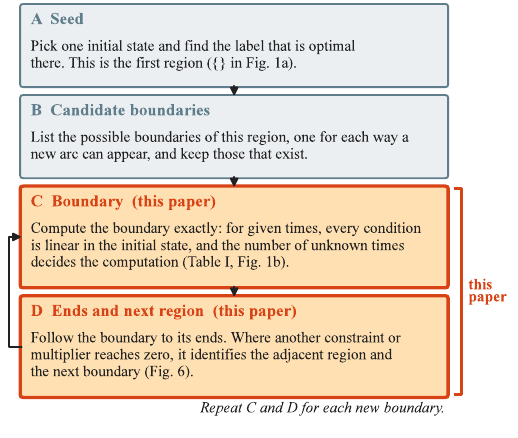}
\caption{The procedure that builds the partition \cite{lamakani2026d} and the part treated here.}
\label{fig:steps}
\end{figure}

\section{The Problem and the Example}\label{sec:problem}

\subsection{Optimal control problem}

The statements of this paper are written for a state of dimension $n=2$, the case of the example; the general case is treated in the companion paper. The state is $x(t)\in\mathbb{R}^{n}$ and the input $u(t)\in\mathbb{R}$ is scalar. The system is linear and time invariant,
\begin{equation}
\dot x(t)=Ax(t)+Bu(t),\qquad x(0)=x_{0},
\label{eq:dyn}
\end{equation}
and over the fixed horizon $[0,t_{f}]$ the cost to be minimised is
\begin{equation}
J(x_{0})=\tfrac12 x(t_{f})^{\!\top}\!Px(t_{f})
+\tfrac12\!\int_{0}^{t_{f}}\!\big(x^{\!\top}\!Qx+Ru^{2}\big)\,dt ,
\label{eq:cost}
\end{equation}
with $Q\succeq0$, $R>0$ and $P\succeq0$. Along the whole horizon the state and the input must respect $p$ path constraints,
\begin{equation}
g_{i}(x,u)=C_{x,i}\,x+C_{u,i}\,u-d_{i}\le 0,\qquad i=1,\dots,p,
\label{eq:con}
\end{equation}
and the initial state $x_{0}$ lies in a box $X\subset\mathbb{R}^{n}$, which we call the domain.
\begin{assumption}\label{as:mixed}
$C_{u,i}\neq 0$ for every $i$.
\end{assumption}
A bound on the input is the case $C_{x,i}=0$. Constraints on the state alone, with $C_{u,i}=0$, make the costate jump \cite{hartl1995} and are not treated here.

\subsection{The evaporator}\label{sec:model}

We use the forced-circulation evaporator of Newell and Lee \cite{newell1989} (Fig.~\ref{fig:evap}), as in \cite{lamakani2026d}. The states are the separator level $L_{2}$, in m, and the product composition $X_{2}$, in \% solids; the input is the product flow $F_{2}$, in kg/min, with the other inputs held at their nominal values. The nominal point is the steady state of \cite{newell1989}, $L_{2}=1.0$~m, $X_{2}=25$~\%, $F_{2}=2.0$~kg/min, and the variables are scaled deviations from it, $x_{1}=(L_{2}-1.0)/0.1$, $x_{2}=(X_{2}-25)/2.5$ and $u=(F_{2}-2.0)/0.5$. The linearised model is
\begin{equation}
A=\begin{bmatrix}0 & 0.1045\\ 0 & -0.1\end{bmatrix},\qquad
B=\begin{bmatrix}-0.25\\ -0.25\end{bmatrix},
\label{eq:AB}
\end{equation}
in $\mathrm{min}^{-1}$, rounded to four decimals; all numbers below use the unrounded model. We set $t_{f}=10$~min, $Q=I$, $R=1$, $P=I$ and the domain $X=[-2,2]^{2}$. The constraints are a band on the product flow, $|F_{2}-2.0|\le1.0$~kg/min (constraints 1 and 2), and a band on the solids leaving with the product, $43\le F_{2}X_{2}\le57$ (constraints 3 and 4), linearised at the nominal point, so that $C_{x}^{\!\top}=\left[\begin{smallmatrix}0&0&0&0\\0&0&5&-5\end{smallmatrix}\right]$, $C_{u}^{\!\top}=[\,1\;\;{-1}\;\;12.5\;\;{-12.5}\,]$ and $d^{\top}=[\,2\;\;2\;\;7\;\;7\,]$. The flow band is never reached in the domain, so only constraints 3 and 4 appear in the labels. The problem is symmetric under $x_{0}\mapsto-x_{0}$, which exchanges constraints 3 and 4, so every boundary has a mirror image, and we follow constraint~3 throughout.

\begin{figure}[tb]
\centering
\includegraphics[width=0.66\columnwidth]{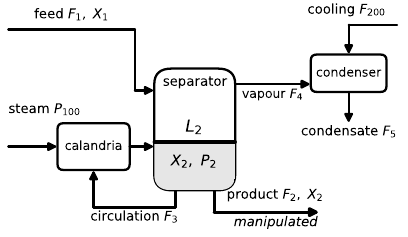}
\caption{The forced-circulation evaporator \cite{newell1989}.}
\label{fig:evap}
\end{figure}

\subsection{Optimality conditions}\label{sec:pmp}

Pontryagin's minimum principle expresses the optimal input through two further quantities. The costate $\lambda(t)\in\mathbb{R}^{n}$ carries the effect of the rest of the horizon into the input at time $t$; it satisfies $\dot\lambda=-\partial\mathcal{H}/\partial x$ with $\lambda(t_{f})=Px(t_{f})$ at the final time. The multiplier $\mu_{i}(t)\ge0$ of constraint $i$ is positive only while that constraint is active. Here $\mathcal{H}$ is the Hamiltonian,
\begin{equation}
\mathcal{H}=\tfrac12\big(x^{\!\top}\!Qx+Ru^{2}\big)+\lambda^{\!\top}(Ax+Bu)
+\textstyle\sum_{i}\mu_{i}\,g_{i}(x,u),
\end{equation}
and at every instant the optimal input minimises $\mathcal{H}$ over the admissible inputs; because $R>0$ the minimiser is unique and the conditions are necessary and sufficient \cite{abhijeet2024}. With no constraint active, $\partial\mathcal{H}/\partial u=0$ yields the unconstrained input $u=u_{\mathrm{free}}=-B^{\!\top}\!\lambda/R$. With constraint $i$ active, $g_{i}=0$ fixes $u=(d_{i}-C_{x,i}x)/C_{u,i}$, and $\partial\mathcal{H}/\partial u=0$ returns $\mu_{i}=-(Ru+B^{\!\top}\!\lambda)/C_{u,i}$. Since the input is scalar and an active constraint fixes it, at most one constraint is active on an arc. The solution with a given label is optimal at $x_{0}$ if its trajectory satisfies $g_{i}\le0$ for every inactive and $\mu_{i}\ge0$ for every active constraint over the whole horizon; we call this test the \emph{check}; the maxima are located from a closed-form bound on the second derivatives, not on a time grid. At the origin of the evaporator the free solution has $\lambda\equiv0$, $u\equiv0$ and $g=-d<0$, so $\{\}$ is optimal there; it is the first region of Fig.~\ref{fig:map}a, and we compute its boundaries with constraint~3.

\section{Computing a Boundary in Five Steps}\label{sec:walk}

Two regions meet at a boundary, and $S\,|\,S'$ denotes the boundary between the regions with labels $S$ and $S'$. We call $S$ the label with fewer arcs; the other label contains one additional arc, the \emph{extra arc}, inserted into $S$, which splits an arc of $S$ in two when it lies inside it. On the boundary the extra arc has zero length. A boundary is a set of initial states $x_{0}$, and we call them its \emph{points}.

\begin{definition}[switching times]\label{def:switch}
A switching time is an instant inside the solution of a label at which the active constraint changes, one arc ends and the next begins. It is an \emph{entry time} $t_{kt}$ if constraint $i$ becomes active there and an \emph{exit time} $t_{kx}$ if constraint $i$ is released there, $k$ numbering the constrained arcs of the label as in \cite{sakizlis2005}; a label with $K$ arcs has $K-1$ switching times, and they vary with $x_{0}$. The only free quantities in the solution of a label are the instants at which its arcs join. At an entry $g_{i}=0$, because a constraint starts to act exactly when it is reached; at an exit $\mu_{i}=0$, because it stops acting exactly when its multiplier, the force it exerts on the input, vanishes.
\end{definition}

\begin{definition}[event time]\label{def:event}
The event time $t^{*}$ is the instant at which the extra arc appears; on the boundary it has zero length, so it is a single instant. It can be the start of the horizon, $t^{*}=0$, its end, $t^{*}=t_{f}$, or an instant inside an arc of $S$. The \emph{event quantity} is $g_{j}$ if the extra arc is an arc of constraint $j$, and $\mu_{j}$ if it is a free arc adjacent to an arc of constraint $j$. The extra arc appears when the event quantity is zero at $t^{*}$. If $t^{*}$ lies inside an arc of $S$, the event quantity only touches zero there.
\end{definition}

The equations of a boundary are then obtained in three items, which steps 2 and 3 state and prove and Table~\ref{tab:bounds} applies to the evaporator.
\begin{enumerate}\setlength{\itemsep}{0pt}
\item[(i)] \emph{Switching conditions of $S$.} For each switching time of $S$ write $g_{i}=0$ at an entry and $\mu_{i}=0$ at an exit.
\item[(ii)] \emph{Event condition of the extra arc.} If the extra arc is an arc of constraint $j$, write $g_{j}(t^{*})=0$, and also $\gd_{j}(t^{*})=0$ when $t^{*}$ is inside a free arc, because $g_{j}$ then only touches zero. If the extra arc is a free arc, write $\mu_{j}(t^{*})=0$ for the constraint $j$ whose arc is adjacent to it, and also $\dot\mu_{j}(t^{*})=0$ when $t^{*}$ is inside that arc.
\item[(iii)] \emph{Unknown times and the fixed one.} Count the unknown times, $m$, the switching times of $S$ and $t^{*}$ only when it is inside an arc. If $m=0$ the boundary is a hyperplane; if $m=1$, fix that one instant, solve the two linear equations for $x_{0}$, and step it.
\end{enumerate}
Each step below first carries out the computation on the evaporator with its numbers, under the heading \emph{Evaporator}, and then states the general result that it illustrates.

\subsection{Step 1. The solution on the boundary as a formula in $x_{0}$}\label{sec:step1}

\subsubsection*{Evaporator} For the boundaries of $\{\}$, $S=\{\}$ and the whole horizon is one free arc, on which the state and the costate $z=(x,\lambda)$ follow $z(t)=e^{Mt}z(0)$ with $M$ from \eqref{eq:Mfree} and \eqref{eq:AB}. The terminal condition, \eqref{eq:term} with the blocks of $e^{Mt_{f}}$, yields
\begin{equation}
\lambda(0)=G\,x_{0},\qquad G=\begin{bmatrix}4.4107 & -0.5410\\ -0.5410 & 2.7870\end{bmatrix},
\label{eq:G}
\end{equation}
so $z(t;x_{0})=e^{Mt}Wx_{0}$. On a free arc $u=u_{\mathrm{free}}=0.25(\lambda_{1}+\lambda_{2})$, so constraint~3 is $g_{3}=5x_{2}+12.5u-7=c^{\!\top}z-7$ with $c^{\!\top}=[\,0\;\;5\;\;3.125\;\;3.125\,]$, and at any instant $t$
\begin{equation}
g_{3}(t)=\underbrace{c^{\!\top}e^{Mt}W}_{a(t)^{\!\top}\!,\ \text{nonlinear in }t}\;x_{0}-7 .
\label{eq:c3}
\end{equation}

\subsubsection*{What is fixed, and why} In \eqref{eq:c3}, $x_{0}$ enters linearly and $t$ only through the coefficient $a(t)$, which is nonlinear in $t$. For a given $x_{0}$, $g_{3}(t)=0$ is a nonlinear equation in $t$, and its root, a switching time, varies with $x_{0}$ (Definition~\ref{def:switch}). For a given $t$, $g_{3}(t)=0$ is one linear equation in $x_{0}$, at $t=2.5$~min the line $2.6230\,x_{0,1}+6.2224\,x_{0,2}=7$. By Lemma~\ref{lem:affine} every constraint function, multiplier and derivative has this form, $f(t)=a(t)^{\!\top}x_{0}+a_{0}(t)$, for every label (Fig.~\ref{fig:fact}); therefore the times are fixed and $x_{0}$ is solved for.

At every point of the boundary the extra arc has zero length, so the solution there consists of the arcs of $S$ only. Within one arc the set of active constraints does not change, so the state and the costate $z=(x,\lambda)$ obey a linear differential equation with constant coefficients,
\begin{equation}
\dot z = M_{k}z+b_{k},
\label{eq:arc}
\end{equation}
obtained by inserting the input of Section~\ref{sec:pmp} into \eqref{eq:dyn} and into $\dot\lambda=-\partial\mathcal{H}/\partial x$. For a free arc,
\begin{equation}
M_{k}=\begin{bmatrix}A & -BB^{\!\top}\!/R\\[2pt] -Q & -A^{\!\top}\end{bmatrix},
\qquad b_{k}=0 ,
\label{eq:Mfree}
\end{equation}
and, for the arc of constraint $i$, with $A_{i}=A-BC_{x,i}/C_{u,i}$ and $\Omega_{i}=Q+(R/C_{u,i}^{2})\,C_{x,i}^{\!\top}C_{x,i}$,
\begin{equation}
M_{k}=\begin{bmatrix}A_{i} & 0\\[2pt] -\Omega_{i} & -A_{i}^{\!\top}\end{bmatrix},
\qquad
b_{k}=\begin{bmatrix} B\,d_{i}/C_{u,i}\\[2pt]
(R\,d_{i}/C_{u,i}^{2})\,C_{x,i}^{\!\top}\end{bmatrix}.
\label{eq:Mb}
\end{equation}

\begin{lemma}\label{lem:affine}
Fix the label, with $K$ arcs, and its switching times $t_{1}<\dots<t_{K-1}$, entry or exit times, and write $h_{k}=t_{k}-t_{k-1}$ for the lengths of the arcs, with $t_{0}=0$ and $t_{K}=t_{f}$. On arc $k$ the solution of \eqref{eq:arc} is $z(t_{k})=\Phi_{k}z(t_{k-1})+\varphi_{k}$ with
\begin{equation}
\begin{bmatrix}\Phi_{k}&\varphi_{k}\\0&1\end{bmatrix}
=\exp\!\left(\!\begin{bmatrix}M_{k}&b_{k}\\0&0\end{bmatrix}\!h_{k}\right),
\label{eq:arcflow}
\end{equation}
and at every $t\in[0,t_{f}]$ the state and the costate are
\begin{equation}
z(t;x_{0})=\Phi(t)\big(Wx_{0}+w\big)+\varphi(t),
\label{eq:affine}
\end{equation}
where $\Phi(t)$ and $\varphi(t)$ are the products of the arc matrices up to $t$, $W=\left[\begin{smallmatrix}I\\G\end{smallmatrix}\right]$, $w=\left[\begin{smallmatrix}0\\\eta\end{smallmatrix}\right]$, $\lambda(0)=G\,x_{0}+\eta$ and, from the state and costate blocks of the flow over the whole horizon,
\begin{equation}
\begin{split}
G&=L^{-1}(P\Phi_{xx}-\Phi_{\lambda x}),\qquad
\eta=L^{-1}(P\varphi_{x}-\varphi_{\lambda}),\\
L&=\Phi_{\lambda\lambda}-P\Phi_{x\lambda}.
\end{split}
\label{eq:term}
\end{equation}
The coefficients depend on the label, the switching times and $t$ but not on $x_{0}$, so for fixed switching times the trajectory is affine in $x_{0}$.
\end{lemma}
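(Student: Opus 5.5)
The plan is to integrate the arc equations \eqref{eq:arc} one arc at a time with the label and the switching times held fixed, and then to impose the two-point boundary conditions. That leaves a single linear equation for the unknown initial costate $\lambda(0)$.

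\emph{Flow on one arc.} On arc $k$ the set of active constraints is fixed, so $M_{k}$ and $b_{k}$ are the constants given by \eqref{eq:Mfree} or \eqref{eq:Mb}. Augment the state to $\zeta=(z,1)$. Then $\dot\zeta=\left[\begin{smallmatrix}M_{k}&b_{k}\\0&0\end{smallmatrix}\right]\zeta$ is a homogeneous linear system with constant coefficients. Its flow over a length $h$ is the matrix exponential, and the top block row of that exponential gives $z(t_{k})=\Phi_{k}z(t_{k-1})+\varphi_{k}$, which is \eqref{eq:arcflow}. The same formula with $h_{k}$ replaced by $t-t_{k-1}$ gives the flow from the start of the arc to any interior instant $t$.

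\emph{Composition across arcs.} Next I check that $z$ is continuous at the switching times. Assumption~\ref{as:mixed} excludes pure state constraints, so the costate has no jumps (as noted after the assumption, only such constraints make it jump). The state is continuous because it solves \eqref{eq:dyn}. Composing affine maps then gives $z(t)=\Phi(t)z(0)+\varphi(t)$. Here $\Phi(t)$ is the ordered product of the $\Phi_{k}$ up to the current arc, times the partial flow within it. The term $\varphi(t)$ collects the propagated $\varphi_{k}$ through the recursion $\varphi\leftarrow\Phi_{k}\varphi+\varphi_{k}$. Both depend only on the label, on $t_{1},\dots,t_{K-1}$ and on $t$.

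\emph{Terminal condition.} Take $t=t_{f}$ and partition $\Phi(t_{f})$ and $\varphi(t_{f})$ into state and costate blocks. This gives
\[
x(t_{f})=\Phi_{xx}x_{0}+\Phi_{x\lambda}\lambda(0)+\varphi_{x},\qquad
\lambda(t_{f})=\Phi_{\lambda x}x_{0}+\Phi_{\lambda\lambda}\lambda(0)+\varphi_{\lambda}.
\]
Imposing $\lambda(t_{f})=Px(t_{f})$ and collecting the $\lambda(0)$ terms yields $L\lambda(0)=(P\Phi_{xx}-\Phi_{\lambda x})x_{0}+P\varphi_{x}-\varphi_{\lambda}$, with $L=\Phi_{\lambda\lambda}-P\Phi_{x\lambda}$. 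This gives $G$ and $\eta$ as in \eqref{eq:term}. Substituting $z(0)=(x_{0},\lambda(0))=Wx_{0}+w$ into the composed flow gives \eqref{eq:affine}. None of these coefficients involves $x_{0}$, which proves the final claim.

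\emph{Main obstacle.} The hardest step is justifying that $L$ is invertible. For a free single arc this is the standard Riccati argument. The block $\Phi_{\lambda\lambda}-P\Phi_{x\lambda}$ is the inverse-transport factor of the Riccati flow; it stays nonsingular on $[0,t_{f}]$ because, with $Q\succeq0$, $P\succeq0$ and $R>0$, the Riccati solution exists and is bounded over the whole horizon. On a constrained arc the dynamics are a Riccati-type Hamiltonian system with $A_{i}$ and $\Omega_{i}\succeq0$ and a zero input block, so the corresponding factor is $e^{-A_{i}^{\top}h}$, which is invertible. I would try to prove invertibility of the composition by backward induction over the arcs. Starting from $\lambda(t_{f})=Px(t_{f})$, I would show that on each arc $\lambda=P_{k}(t)x+q_{k}(t)$ with $P_{k}(t)$ bounded, and take this as the form of the condition actually used in the proof. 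The intended contrapositive is that if $L$ were singular, some $(x(t_{k}),\lambda(t_{k}))$ with $x=0$ and $\lambda\neq0$ would be carried to a pair that violates the terminal relation. I expect that step to be the delicate one and am not certain it goes through. If it fails, I would state the nonsingularity of $L$ as the standing condition under which the unique solution guaranteed by strict convexity ($R>0$, Section~\ref{sec:pmp}) is represented by \eqref{eq:term}.
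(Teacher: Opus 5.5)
Your arc flows, their composition (with $z$ continuous at the junctions because there are no pure state constraints), and the solution of $\lambda(t_f)=Px(t_f)$ for $\lambda(0)$ are correct and match the paper. The gap is the one you flag yourself. The nonsingularity of $L=\Phi_{\lambda\lambda}-P\Phi_{x\lambda}$, without which $G$ and $\eta$ are not defined, is never proved for a label with several arcs. Your backward Riccati induction asserts that $P_k(t)$ stays bounded across the arcs but does not show it. Your fallback of taking nonsingularity as a standing condition turns the lemma into a conditional statement. The paper settles this by the strict convexity of the problem with the arcs fixed. You invoke that convexity yourself, but you never turn it into a statement about $L$. (Your Riccati route can also be completed, by bounding the solution between $0$ and the zero-input Lyapunov solution, but that bound is exactly the missing step.)

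The direct way is an energy identity. Because $L$ involves only the $\Phi_k$, suppose $Lv=0$ with $v\neq0$, and run the homogeneous flow ($b_k=0$) from $z(0)=(0,v)$. Then $\lambda(t_f)=\Phi_{\lambda\lambda}v=P\Phi_{x\lambda}v=Px(t_f)$. Along this flow the terms $\lambda^\top Ax$ and $\lambda^\top A_ix$ cancel, so $\tfrac{d}{dt}(\lambda^\top x)=-x^\top Qx-\lambda^\top BB^\top\lambda/R$ on a free arc and $\tfrac{d}{dt}(\lambda^\top x)=-x^\top\Omega_i x$ on an arc of constraint $i$. Integrate over $[0,t_f]$, using $x(0)=0$ and the continuity of $z$ at the junctions:
\[
x(t_f)^\top Px(t_f)+\int_{\text{free arcs}}\big(x^\top Qx+\lambda^\top BB^\top\lambda/R\big)\,dt+\sum_i\int_{\text{arcs of }i}x^\top\Omega_i x\,dt=0 .
\]
Every term is nonnegative, so $B^\top\lambda\equiv0$ on the free arcs. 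Then $x$ obeys $\dot x=Ax$ or $\dot x=A_ix$ from $x(0)=0$, so $x\equiv0$ and hence $\lambda(t_f)=0$. Integrating $\dot\lambda=-A^\top\lambda$ or $-A_i^\top\lambda$ backward then gives $\lambda\equiv0$, which contradicts $\lambda(0)=v\neq0$. This is the content of the paper's one-line remark: the homogeneous fixed-arc problem from $x_0=0$ has $u=0$ as its only stationary point.
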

\begin{proof}
See \cite{lamakani2026d}; \eqref{eq:term} is the terminal condition $\lambda(t_{f})=Px(t_{f})$ solved for $\lambda(0)$, and $L$ is invertible because the problem with fixed arcs is strictly convex.
\end{proof}

\begin{figure}[tb]
\centering
\includegraphics[width=0.92\columnwidth]{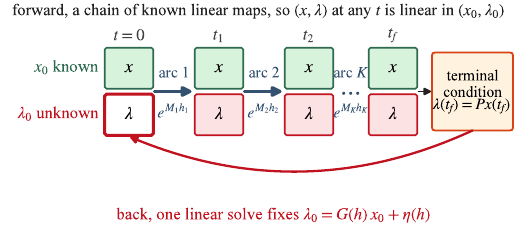}
\caption{Lemma~\ref{lem:affine} in a picture. Forward, $(x,\lambda)$ propagates through the arcs, so it is linear in $(x_{0},\lambda_{0})$ at every instant; back, the terminal condition fixes $\lambda_{0}$ by one linear solve.}
\label{fig:fact}
\end{figure}

\subsection{Step 2. The equations that hold on the boundary}\label{sec:step2}

\subsubsection*{Evaporator} Fig.~\ref{fig:horizon} shows the conditions along the horizon and Table~\ref{tab:bounds} lists them for the four boundaries of constraint~3. Consider row 3, the boundary between $\{3\}\{\}$ and $\{\}\{3\}\{\}$, with $S=\{3\}\{\}$. In $\{3\}\{\}$ constraint~3 is active on $[0,t_{1x}]$ and exits at $t_{1x}$, so $\mu_{3}(t_{1x})=0$ holds at every $x_{0}$ of $\{3\}\{\}$, the switching condition (i). This equation determines $t_{1x}$ for a given $x_{0}$, at the point B $=(-0.45,\,1.40)$ of Fig.~\ref{fig:horizon} $t_{1x}=5.44$~min, and it does not involve the boundary. The boundary adds the extra arc of $\{\}\{3\}\{\}$, a free arc at the start of the horizon, so the multiplier of constraint~3 vanishes at $t=0$, $\mu_{3}(0)=0$ with $t^{*}=0$, the event condition (ii). Inside $\{\}\{3\}\{\}$ itself, constraint~3 enters at $t_{1t}$ and exits at $t_{1x}$, $g_{3}(t_{1t})=0$ and $\mu_{3}(t_{1x})=0$; at the point C $=(-1.30,\,1.72)$ these yield $t_{1t}=1.38$~min and $t_{1x}=4.55$~min.

\begin{figure}[tb]
\centering
\includegraphics[width=\columnwidth]{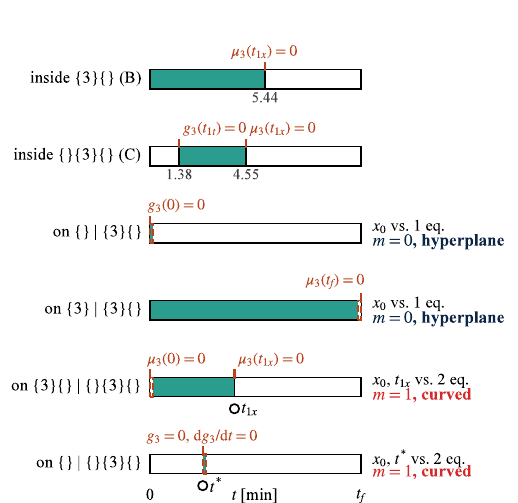}
\caption{The horizon at six initial states, B, C and one on each of the four boundaries of constraint~3. Teal, constraint~3 active; white, free; dashed, the extra arc. Red ticks, the equations of Proposition~\ref{prop:switch}; open circle, the fixed switching time; $m$ counts the unknowns besides $x_{0}$.}
\label{fig:horizon}
\end{figure}

\begin{table*}[t]
\caption{The equations of the four boundaries of constraint~3 of the evaporator, two states ($n=2$), steps 2 and 3; $m$ counts the unknowns besides $x_{0}$.}
\label{tab:bounds}
\centering
\footnotesize\renewcommand{\arraystretch}{1.0}\setlength{\tabcolsep}{4.2pt}
\begin{tabular}{@{}l l l l l l l c@{}}
\toprule
 & & \multicolumn{2}{c}{switching time of $S$ (holds in all of $S$)} & \multicolumn{2}{c}{event time $t^{*}$ (holds on the boundary only)} & & \\
\cmidrule(lr){3-4}\cmidrule(lr){5-6}
boundary & $S$ & description & switching condition (i) & description & event condition (ii) & fixed & $m$ \\
\midrule
$\{\}\,|\,\{3\}\{\}$ & $\{\}$ & no switching time & -- & constraint~3 enters at $t^{*}=0$ & $g_{3}(0)=0$ & -- & 0 \\
$\{3\}\,|\,\{3\}\{\}$ & $\{3\}$ & no switching time & -- & constraint~3 exits at $t^{*}=t_{f}$ & $\mu_{3}(t_{f})=0$ & -- & 0 \\
$\{3\}\{\}\,|\,\{\}\{3\}\{\}$ & $\{3\}\{\}$ & constraint~3 exits at $t_{1x}$ & $\mu_{3}(t_{1x})=0$ & free arc at $t^{*}=0$ & $\mu_{3}(0)=0$ & $t_{1x}$ & 1 \\
$\{\}\,|\,\{\}\{3\}\{\}$ & $\{\}$ & no switching time & -- & constraint~3 touches zero at $t^{*}$ & $g_{3}(t^{*})=0$, $\gd_{3}(t^{*})=0$ & $t^{*}$ & 1 \\
\bottomrule
\end{tabular}
\end{table*}

\begin{proposition}[switching conditions]\label{prop:switch}
Let the active constraint change at an instant $\tau$, inside a region or on its boundary.
\begin{enumerate}\setlength{\itemsep}{1pt}
\item[(a)] If $\tau$ is an entry of constraint $i$ from a free arc, then $g_{i}(\tau)=0$.
\item[(b)] If $\tau$ is an exit of constraint $i$ into a free arc, then $\mu_{i}(\tau)=0$.
\item[(c)] If entry and exit coincide at $\tau$, the quantity that reaches zero only touches it, so its value and its time derivative both vanish. For an arc of constraint $i$ inside a free arc, $g_{i}(\tau)=0$ and $\gd_{i}(\tau)=0$; for a free arc inside the arc of constraint $i$, $\mu_{i}(\tau)=0$ and $\dot\mu_{i}(\tau)=0$.
\end{enumerate}
\end{proposition}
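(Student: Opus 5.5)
The proof rests on one observation: along the optimal solution, the input $u(t)$, the constraint values $g_{i}(t)$ and the multipliers $\mu_{i}(t)$ are continuous in $t$. I would establish this first. By Lemma~\ref{lem:affine} the state $x$ is absolutely continuous on $[0,t_{f}]$. Because Assumption~\ref{as:mixed} excludes pure state constraints, the costate $\lambda$ does not jump \cite{hartl1995}, so $\lambda$ is continuous as well. Since $R>0$, the Hamiltonian is strictly convex in $u$. Its unique minimiser over the admissible set $\{u: g_{i}(x,u)\le0\}$ is therefore a continuous function of $(x,\lambda)$: it is the projection of $u_{\mathrm{free}}=-B^{\top}\lambda/R$ onto an interval whose endpoints are affine in $x$. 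Hence $u(t)$ is continuous. It follows that $g_{i}(t)=C_{x,i}x+C_{u,i}u-d_{i}$ is continuous. The formula $\mu_{i}=-(Ru+B^{\top}\lambda)/C_{u,i}$ from Section~\ref{sec:pmp} holds on the arc of constraint $i$, and $\mu_{i}\equiv0$ holds on free arcs, so $\mu_{i}$ is continuous as well, because at a junction the arc formula tends to $0$. That last fact is part (b) itself, so the argument has to be arranged in the right order.

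For (a), on the free arc just before $\tau$ we have $g_{i}<0$ or $g_{i}\le0$, and on the constrained arc just after $\tau$ we have $g_{i}\equiv0$. Continuity of $g_{i}$ at $\tau$, which follows from continuity of $u$, gives $g_{i}(\tau)=0$.

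For (b), I would use continuity of $u$ once more. On the free arc after $\tau$, $u=u_{\mathrm{free}}$, so $Ru(\tau^{+})+B^{\top}\lambda(\tau)=0$. Both $u$ and $\lambda$ are continuous, so the constrained-arc expression $-(Ru+B^{\top}\lambda)/C_{u,i}$ tends to $0$ as $t\to\tau^{-}$, and therefore $\mu_{i}(\tau)=0$. The key input is the continuity of $u$ from the strictly convex projection, and I would state it explicitly as the crux.

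For (c), consider first the case of an arc of constraint $i$ of zero length inside a free arc. Near $\tau$ the solution is the free-arc solution, which is analytic in $t$ by \eqref{eq:arc} with $M_{k}$ from \eqref{eq:Mfree}. Optimality, through the check, requires $g_{i}(t)\le0$ on both sides of $\tau$, and $g_{i}(\tau)=0$ holds by the limiting form of (a). So $\tau$ is an interior maximum of a differentiable function, which gives $\gd_{i}(\tau)=0$. In the case of a free arc of zero length inside the arc of constraint $i$, the same argument applies to the analytic function $\mu_{i}(t)$ on the constrained arc, which satisfies $\mu_{i}\ge0$ with $\mu_{i}(\tau)=0$, and so $\dot\mu_{i}(\tau)=0$. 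If $\tau$ is the degenerate limit of a genuine entry/exit pair, I would alternatively pass to the limit of (a) and (b) as the extra arc shrinks. The main obstacle is justifying that $\tau$ is interior, that is $0<\tau<t_{f}$, since at $\tau=0$ or $t_{f}$ only the value condition holds. This is consistent with Table~\ref{tab:bounds}, where those cases carry no derivative condition.
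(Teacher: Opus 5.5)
Your argument is correct and is essentially the paper's. The paper also obtains (a) and (b) as the junction conditions of the minimum principle \cite{hartl1995}: $g_{i}<0$ on the free arc and $\mu_{i}>0$ on the arc of constraint $i$, each reaching zero where the arcs join. It obtains (c) from the fact that a quantity that only touches zero has zero derivative there.

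What you add is a self-contained justification of the continuity that the paper takes from the citation: a continuous costate, and a continuous input as the clipping of $u_{\mathrm{free}}$ onto an interval whose endpoints depend continuously on $x$. Your remark that the derivative condition needs $0<\tau<t_{f}$ is also right. It is part of the hypothesis ``inside a free arc'' rather than an obstacle, and it agrees with the event condition (ii) and Table~\ref{tab:bounds}.
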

These are the junction conditions of the minimum principle \cite{hartl1995}; on a free arc $g_{i}<0$ and on the arc of constraint $i$ $\mu_{i}>0$, so each reaches zero where the arcs join, and a quantity that only touches zero has zero derivative there. On a boundary Proposition~\ref{prop:switch} applies in two places. At each switching time of $S$ it yields the switching condition (i), and at the event time $t^{*}$ the event condition (ii) of the extra arc. A direct switch from one constraint to another, as in $\{3\}\{1\}\{\}$, has its own condition and is left to the companion paper; the evaporator has none.

\subsection{Step 3. Count the unknowns, fix one and solve}\label{sec:step3}\label{sec:step4}

\subsubsection*{Evaporator} The last two columns of Table~\ref{tab:bounds} show what is fixed. On the hyperplanes $\{\}\,|\,\{3\}\{\}$ and $\{3\}\,|\,\{3\}\{\}$ nothing is fixed, $m=0$; on $\{3\}\{\}\,|\,\{\}\{3\}\{\}$ the exit time $t_{1x}$ is fixed and on $\{\}\,|\,\{\}\{3\}\{\}$ the event time $t^{*}$, $m=1$. Row 1 of Table~\ref{tab:bounds}: with $z(0)=Wx_{0}$, $g_{3}(0)=c^{\!\top}Wx_{0}-7=0$ is
\begin{equation}
12.0927\,x_{0,1}+12.0188\,x_{0,2}=7,
\label{eq:line}
\end{equation}
or $25.15\,L_{2}+X_{2}=51.61$ in the units of Fig.~\ref{fig:map}a, its blue line. Row 2: $\mu_{3}(t_{f})=0$ on the one-arc label $\{3\}$ yields the line $5.38\,L_{2}+X_{2}=35.07$, in agreement with \cite{lamakani2026d}. Row 3: on the trajectory of $\{3\}\{\}$ with its exit time fixed at $t_{1x}=4$~min (Lemma~\ref{lem:affine}), the two equations $\mu_{3}(0)=0$ and $\mu_{3}(t_{1x})=0$ yield the point $(-1.0835,\,1.6288)$; the check is satisfied, $\mu_{3}\ge0$ up to $t_{1x}$ and $g_{3}\le0$ after it, the fifth row of Fig.~\ref{fig:horizon}; stepping $t_{1x}$ traces the purple curve. Its event time is $0$ at every point, yet the boundary is curved, because $t_{1x}$ enters the costate at $t=0$ through the terminal condition. Row 4: with $z(t^{*})=e^{Mt^{*}}Wx_{0}$ the two equations are $c^{\!\top}e^{Mt^{*}}W\,x_{0}=7$ and $c^{\!\top}Me^{Mt^{*}}W\,x_{0}=0$, that is $N(t^{*})\,x_{0}=r$ with $r=(7,0)$, so the boundary is the explicit curve $x_{0}(t^{*})=N(t^{*})^{-1}r$; at $t^{*}=2.5$~min,
\begin{equation}
\begin{bmatrix}2.6230 & 6.2224\\ -2.1852 & -1.5784\end{bmatrix}x_{0}=\begin{bmatrix}7\\0\end{bmatrix}
\;\Rightarrow\;
x_{0}=\begin{bmatrix}-1.1683\\ \phantom{-}1.6174\end{bmatrix},
\label{eq:pt25}
\end{equation}
and the check is satisfied at this point, since along the free trajectory from it $g_{1},\dots,g_{4}\le0$ over the whole horizon and $g_{3}$ touches zero only at $t^{*}=2.5$~min, the last row of Fig.~\ref{fig:horizon}. Fig.~\ref{fig:map}b shows the crossings for $t^{*}=0,\dots,4$, each checked in the same way; stepping $t^{*}$ traces the red curve of Fig.~\ref{fig:map}a.

\begin{theorem}[the unknowns of a boundary]\label{thm:boundary}
Let $S$ have $K$ arcs. The unknowns of the boundary are $x_{0}$, the $K-1$ switching times of $S$ and, when the extra arc is inside an arc of $S$, the event time $t^{*}$; when the extra arc is at $0$ or at $t_{f}$, $t^{*}$ is known. The number of unknown times is
\begin{equation}
m=(K-1)+\begin{cases}1 & \text{if the extra arc is inside an arc of } S,\\ 0 & \text{otherwise,}\end{cases}
\label{eq:m}
\end{equation}
and the number of equations is $m+1$, the $K-1$ switching conditions and the event condition. Hence, when the equations are independent, the boundary has dimension $n-1$ for every $K$ and every position of the extra arc, and for fixed switching times every equation is linear in $x_{0}$. For $m=1$ the time that is fixed is $t^{*}$ when $S$ has one arc and the switching time of $S$ when it has two.
\end{theorem}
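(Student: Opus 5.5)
The argument is a count of unknowns against equations. On the boundary the extra arc has zero length, so the solution there consists of the arcs of $S$ only (Step 1). Lemma~\ref{lem:affine} says that this solution is determined by $x_{0}$ and the switching times $t_{1},\dots,t_{K-1}$ of $S$, since the only free quantities are the instants at which arcs join (Definition~\ref{def:switch}). The event time is the only further instant that can enter. By Definition~\ref{def:event} it is fixed at $0$ or $t_{f}$ when the extra arc lies at an end of the horizon. When the extra arc lies inside an arc of $S$, $t^{*}$ is a new free instant. This gives the list of unknowns and formula \eqref{eq:m}.

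Next I count the equations, using Proposition~\ref{prop:switch}. Each of the $K-1$ switching times of $S$ is an entry or an exit, so by (a) or (b) it contributes exactly one scalar condition, $g_{i}=0$ or $\mu_{i}=0$.

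For the event condition I split into two cases. If $t^{*}\in\{0,t_{f}\}$, there is one condition: the event quantity vanishes there, as in rows 1--3 of Table~\ref{tab:bounds}. If $t^{*}$ is interior, part (c) gives two conditions, value and derivative. The interior case adds one unknown and one extra equation, so the two cases balance. In both cases the total is $(K-1)+1$ or $(K-1)+2$, which equals $m+1$.

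So there are $m+1$ scalar equations in $n+m$ unknowns. If the equations are independent, meaning their Jacobian in $(x_{0},t)$ has full rank, the implicit function theorem gives a solution set of dimension $n+m-(m+1)=n-1$. This holds independently of $K$ and of where the extra arc lies.

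Linearity in $x_{0}$ follows from Lemma~\ref{lem:affine}. With all times fixed, $z(t;x_{0})$ is affine in $x_{0}$. Each $g_{i}$ is affine in $(x,u)$, and $u$ on every arc is affine in $z$. Each $\mu_{i}=-(Ru+B^{\top}\lambda)/C_{u,i}$ is affine in $z$, and Assumption~\ref{as:mixed} makes the division legitimate. The derivatives $\gd_{i}$ and $\dot\mu_{i}$ are affine in $\dot z=M_{k}z+b_{k}$. So every condition has the form $a(t)^{\top}x_{0}+a_{0}(t)=0$.

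For the final claim, suppose $m=1$. Then either $K=1$ with the extra arc interior, so the free time is $t^{*}$, or $K=2$ with the extra arc at an end, so the free time is the single switching time. I will also rule out $K=1$ with the extra arc at an end, since that gives $m=0$. In each case fixing the one free time leaves two linear equations in $x_{0}\in\mathbb{R}^{2}$.

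The main obstacle is the independence hypothesis, and in particular justifying that an interior event contributes exactly two conditions and no more. I would argue that a tangency of the event quantity is generic, so value and derivative are the only conditions, and I would leave degenerate higher-order contact outside the statement, as the phrase ``when the equations are independent'' allows.
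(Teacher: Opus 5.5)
Your proposal is correct and follows the paper's own argument, which is exactly this count: Proposition~\ref{prop:switch} gives one equation at each switching time of $S$ and the event condition at $t^{*}$, an interior extra arc adds the unknown $t^{*}$ together with one equation (the slope), so there are $n+m$ unknowns against $m+1$ equations, and linearity in $x_{0}$ is Lemma~\ref{lem:affine}. You only make explicit what the paper leaves implicit: why $g_{i}$, $\mu_{i}$ and their time derivatives are affine in $z$, the full-rank reading of ``independent'', and, for $m=1$, the case split between $K=1$ with an interior extra arc and $K=2$ with the extra arc at an end.
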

\begin{proof}
The equations are Proposition~\ref{prop:switch} at each instant where the active constraint changes on the boundary; an extra arc inside an arc adds one unknown, $t^{*}$, and one equation, the slope. There are $n+m$ unknowns and $m+1$ equations, and linearity is Lemma~\ref{lem:affine}.
\end{proof}

\begin{corollary}[computation of a boundary]\label{cor:compute}
Table~\ref{tab:cases} summarises the computation. With $m=0$ the one equation is the boundary. With $m=1$, fix the switching time of Theorem~\ref{thm:boundary}, solve the two linear equations for $x_{0}$, the point of the boundary at which the switch takes place at that instant, and step it over its range. The evaporator has no boundary with $m\ge2$. Each computed $x_{0}$ is then tested by the check of Section~\ref{sec:pmp} over the whole horizon, with the event quantity reaching zero only at $t^{*}$; a point that fails lies past an end of the boundary (step 4).
\end{corollary}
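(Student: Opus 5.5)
The corollary follows from Theorem~\ref{thm:boundary} and Lemma~\ref{lem:affine} once the count of unknowns is matched to the count of equations and the check is added to decide which solutions belong to the boundary. The argument splits into the cases $m=0$ and $m=1$.

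\emph{Case $m=0$.} Here $S$ has one arc and the extra arc lies at $t^{*}=0$ or $t^{*}=t_{f}$. No time is free, so the arc matrices in \eqref{eq:arcflow}, and hence $G$ and $\eta$ in \eqref{eq:term}, are fixed numbers. By \eqref{eq:affine} the single event condition of Proposition~\ref{prop:switch}, for example $g_{3}(0)=0$ or $\mu_{3}(t_{f})=0$, has the form $a^{\top}x_{0}+a_{0}=0$ with constant $a$ and $a_{0}$. Every point of the boundary satisfies it, so the boundary is contained in this hyperplane. Conversely, a point of the hyperplane for which the check holds solves the optimality conditions of $S$ with the extra arc of zero length. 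Because the minimiser is unique when $R>0$, such a point lies in the closure of both regions.

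\emph{Case $m=1$.} Let $\tau$ be the time named in Theorem~\ref{thm:boundary}: $t^{*}$ when $K=1$, or the switching time of $S$ when $K=2$. Once $\tau$ is fixed, every time in the label is known. Lemma~\ref{lem:affine} then makes both equations affine in $x_{0}$. These equations are the switching condition together with the event condition, or the value and slope conditions of Proposition~\ref{prop:switch}(c). Written together they form $N(\tau)x_{0}=r(\tau)$ with $N(\tau)\in\mathbb{R}^{2\times2}$. For $n=2$ and independent equations, $N(\tau)$ is invertible, and $x_{0}(\tau)=N(\tau)^{-1}r(\tau)$ is the only initial state at which the switch happens at $\tau$.

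Two further arguments are needed for this case. First, every boundary point is reached this way. Given a boundary point $x_{0}$, take its own value of $\tau$; then $x_{0}$ solves $N(\tau)x_{0}=r(\tau)$, and by uniqueness it equals $x_{0}(\tau)$. Second, $\tau\mapsto x_{0}(\tau)$ is continuous, and in fact analytic wherever $\det N\neq0$, because its entries are built from matrix exponentials. Stepping $\tau$ over its range therefore traces a connected curve.

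The check is what limits the curve. If the check fails at $x_{0}(\tau)$, then the trajectory of $S$ is not optimal there. One possibility is that some inactive $g_{i}$ becomes positive or some $\mu_{i}$ becomes negative. Another is that the event quantity reaches zero at a second instant. In either case $x_{0}(\tau)$ lies outside the closure of region $S$, that is, past an end of the boundary.

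The main obstacle is the independence hypothesis, i.e.\ $\det N(\tau)\neq0$, because it cannot be established in general from the preceding results. I would state the corollary under this hypothesis, as Theorem~\ref{thm:boundary} already does, and note that a zero of $\det N$ at some $\tau$ is detected during the stepping. The remark that the evaporator has no $m\ge2$ boundary is verified directly by applying \eqref{eq:m} to the rows of Table~\ref{tab:bounds}.
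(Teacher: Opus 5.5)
The paper gives no separate proof: the corollary is read off from the count of Theorem~\ref{thm:boundary} and the linearity of Lemma~\ref{lem:affine}, exactly as you do, with one affine equation for $m=0$, the $2\times2$ system $N(\tau)x_{0}=r(\tau)$ for $m=1$ under the same independence hypothesis, and the check to discard points where $S$ is not optimal. One step needs tightening: a point at which the event quantity only touches zero at a second instant still satisfies $g_{i}\le0$ and $\mu_{i}\ge0$, so $S$ is optimal there and the point is itself an end of the boundary (where a second extra arc appears), lying in the closure of $S$; only a strict violation of the check places $x_{0}(\tau)$ outside the closure of $S$ and past the end.
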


\begin{table}[tb]
\caption{Computation of a boundary for each $m$ (Corollary~\ref{cor:compute}).}
\label{tab:cases}
\centering
\footnotesize\renewcommand{\arraystretch}{1.0}\setlength{\tabcolsep}{2pt}
\begin{tabular}{@{}c >{\raggedright\arraybackslash}p{2.25cm} l l@{}}
\toprule
$m$ & fixed & determined & method \\
\midrule
$0$ & none & $x_{0}$ & one linear equation, closed form \\
$1$ & $t^{*}$ or the switching time of $S$ & $x_{0}$ & two linear equations in $x_{0}$ \\
$\ge2$ & one of them & the others, $x_{0}$ & Newton, then linear equations \\
\bottomrule
\end{tabular}
\end{table}

\subsection{Step 4. The ends of the boundary}\label{sec:step5}

\subsubsection*{Evaporator} The hyperplane boundary given by \eqref{eq:line} reaches the side $x_{0,1}=2$ of the domain, case (a) of Table~\ref{tab:ends}, at $(2.0000,\,-1.4299)$. At its other end every point has $g_{3}(0)=0$, and $\{\}$ remains optimal only while $g_{3}$ decreases immediately after $t=0$, so the boundary ends where the slope $\gd_{3}(0)=c^{\!\top}MWx_{0}=s^{\!\top}x_{0}$, $s^{\!\top}=[\,-5.9442\;\;-3.2792\,]$, is zero, case (c); solving \eqref{eq:line} with $s^{\!\top}x_{0}=0$ yields $x_{0}^{E}=(-0.7221,\,1.3090)$, the point E of Fig.~\ref{fig:map}a. The curve of row 4 starts at E, where $t^{*}=0$, case (d), and ends on the side $x_{0,1}=-2$ at $t^{*}=4.995$~min, case (a). The curve of row 3 starts at E in the limit $t_{1x}\to0$, case (d), and ends on the side $x_{0,2}=2$ at $t_{1x}=7.74$~min, case (a).

\begin{proposition}[the ends of a boundary]\label{prop:ends}
Follow the boundary, along the hyperplane if $m=0$ or by stepping the fixed switching time if $m=1$. The boundary ends at the first point where one more equation holds, one of the four of Table~\ref{tab:ends}. For $m=0$ that equation and the boundary equation yield the end point; for $m=1$ it determines the fixed switching time, and $x_{0}$ follows from the two equations of the boundary. Which of the four occurs first is identified by the check at the computed points.
\end{proposition}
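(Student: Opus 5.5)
The argument is a continuation argument along the one-parameter family of boundary points, combined with the fact that optimality is certified by the check of Section~\ref{sec:pmp}. Parametrise the boundary by a scalar $s$. For $m=1$, $s$ is the fixed switching time of Theorem~\ref{thm:boundary}; for $m=0$, $s$ is arc length along the hyperplane. For each $s$, Corollary~\ref{cor:compute} gives a point $x_{0}(s)$. When $m=1$ this point is $N(s)^{-1}r(s)$, and $N(s)$ and $r(s)$ are analytic in $s$ by Lemma~\ref{lem:affine}, since matrix exponentials are analytic in the arc lengths. Hence $x_{0}(s)$ is continuous wherever $N(s)$ is invertible, and every function $g_{i}(t)$ and $\mu_{i}(t)$ along the trajectory of $S$ from $x_{0}(s)$ is continuous in $(s,t)$. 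A point $x_{0}(s)$ belongs to the boundary $S\,|\,S'$ exactly when it lies in the domain $X$ and passes the check with the labels $S$ and $S'$ both valid in the limit. I would start from a point where the check holds strictly away from the instants forced to zero, such as the points computed in Step~3.

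The first step is to show that the set of $s$ at which the check holds is relatively open on its strict part. Let $\varepsilon>0$ be the margin: $g_{i}<0$ on inactive arcs and $\mu_{i}>0$ on active arcs, except at the junctions and at $t^{*}$. Continuity in $(s,t)$ on the compact horizon preserves this margin for nearby $s$. The junctions need separate care, because the functions vanish there by construction. Near a junction $\tau$ I would use the nonzero slope at $\tau$, or the nonzero second derivative at a touch point $t^{*}$, and show that it persists by continuity. This yields a local sign condition that survives perturbation.

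Consequently, as $s$ moves, the check can first fail only when one of these strict inequalities degenerates. I would classify the degeneracies, and I expect them to be exactly the rows of Table~\ref{tab:ends}. (a) $x_{0}(s)$ leaves $X$, giving an equation $x_{0,l}=\pm2$. (b) A new interior zero appears: some inactive $g_{i}$ or active $\mu_{i}$ touches zero at an interior instant, so its value and its derivative vanish there, as in Proposition~\ref{prop:switch}(c). (c) A slope condition at a junction or at $0$ or $t_{f}$ becomes zero, as in the computation of E. (d) An arc length $h_{k}$ or $t^{*}$ reaches $0$ or $t_{f}$, so that two junctions merge. In each case exactly one additional scalar equation holds. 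By Lemma~\ref{lem:affine} that equation is again of the form $a(t)^{\!\top}x_{0}+a_{0}(t)=0$.

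For $m=0$ I would append this equation to the boundary equation. Both are linear in $x_{0}$ with $n=2$, so they determine the end point, where an extra instant from case (b) is handled by the same fix-and-step device. For $m=1$, I would substitute $x_{0}(s)$ into the extra equation, which gives a scalar equation in $s$ alone. Its first root along the direction of travel is the end, and $x_{0}$ then follows from the two boundary equations. The claim that the \emph{first} such $s$ is the end follows from the openness established above: the check holds on the whole interval up to that root and fails immediately beyond it. Because the check locates the maxima exactly, evaluating it at the computed points tells which case occurred.

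The main obstacle is showing that this list of four cases is exhaustive. In particular, one must rule out that $N(s)$ becomes singular, or that $x_{0}(s)$ escapes to infinity, before any listed equation holds. I would handle this by observing that singularity means the two equations become dependent. In that situation the touch point degenerates to a higher-order contact, which falls under (b) or (c), or $x_{0}(s)$ becomes unbounded and must first leave the bounded box $X$, which is case (a).
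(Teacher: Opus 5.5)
Your proposal takes the same approach as the paper's proof, made more explicit. The paper also places the end where $S$ stops being the solution on one side, uses Proposition~\ref{prop:switch} to reduce this to another constraint becoming active or inactive, the extra arc detaching from an end of the horizon, or an arc vanishing (otherwise a side of the domain is reached), and then applies the count of Theorem~\ref{thm:boundary} to conclude that the added equation fixes a point, which is what your openness-plus-classification argument does. Your continuity argument and your treatment of a singular $N(s)$ go beyond what the paper writes, but openness only shows that the check holds up to the first root, not that it ``fails immediately beyond it''; that needs a transversality (sign-change) assumption, which the paper's statement also leaves implicit.
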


\begin{table*}[t]
\caption{The four ways a boundary ends, cases (a) to (d) of Proposition~\ref{prop:ends}, each adding one equation. The extra arc is an arc of constraint $j$, constraint~3 in the evaporator, and $k$ is any other constraint.}
\label{tab:ends}
\centering
\footnotesize\renewcommand{\arraystretch}{1.0}\setlength{\tabcolsep}{4pt}
\begin{tabular}{@{}l >{\raggedright\arraybackslash}p{3.6cm} >{\raggedright\arraybackslash}p{2.4cm} >{\raggedright\arraybackslash}p{5.0cm} >{\raggedright\arraybackslash}p{4.9cm}@{}}
\toprule
case & the boundary ends where & can occur & added equation & after the end \\
\midrule
(a) & it reaches a side of the domain & on every boundary & $x_{0,1}$ or $x_{0,2}$ at its bound & the boundary exits the domain \\
(b) & another constraint $k$ becomes active or inactive & on every boundary & $g_{k}=0$ (new entry or touch) or $\mu_{k}=0$ (new exit) & a new arc of constraint $k$; four regions meet (Prop.~\ref{prop:regions}) \\
(c) & the extra arc detaches from the end of the horizon & only if $t^{*}=0$ or $t_{f}$ & $\gd_{j}=0$ or $\dot\mu_{j}=0$ at that end & the extra arc lies inside the horizon; three regions meet (Prop.~\ref{prop:regions}) \\
(d) & the fixed switching time reaches the end of its range & only if $m\ge1$ & a switching time of $S$ equal to $0$, $t_{f}$ or its neighbour, or $t^{*}=0$ or $t_{f}$ & an arc of $S$ or the extra arc vanishes \\
\bottomrule
\end{tabular}
\end{table*}
\begin{proof}
Past an end, $S$ is no longer the solution on one side of the boundary. By Proposition~\ref{prop:switch} the solution changes only where a constraint becomes active or inactive, so one of three events occurs at the end: another constraint $k$ becomes active or inactive, case (b) of Table~\ref{tab:ends}; the constraint $j$ of the extra arc would be violated next to the end of the horizon, case (c); or an arc of $S$ or the extra arc vanishes, case (d). Otherwise the boundary reaches a side of the domain, case (a). Each case adds one equation, so by the count of Theorem~\ref{thm:boundary} the end is a point.
\end{proof}

\subsection{Step 5. The regions at an end}\label{sec:step6}

\begin{figure}[tb]
\centering
\includegraphics[width=0.75\columnwidth]{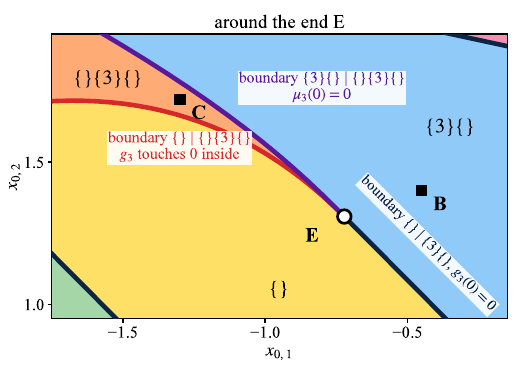}
\caption{The end E of the hyperplane boundary $\{\}\,|\,\{3\}\{\}$ (blue), where the curved boundaries $\{\}\,|\,\{\}\{3\}\{\}$ (red) and $\{3\}\{\}\,|\,\{\}\{3\}\{\}$ (purple) begin.}
\label{fig:end}
\end{figure}

\subsubsection*{Evaporator} Consider the free solution, label $\{\}$, from an initial state near E, and the signs of $g_{3}(0)$ and of the slope $\gd_{3}(0)$ along it. If $g_{3}(0)<0$ and $\gd_{3}(0)<0$, constraint~3 is satisfied at $t=0$ and $g_{3}$ decreases, so the free solution is optimal and $x_{0}$ lies in $\{\}$. If $g_{3}(0)>0$, the free solution violates constraint~3 at $t=0$, so constraint~3 is active from the start and $x_{0}$ lies in $\{3\}\{\}$, whatever the sign of $\gd_{3}(0)$. If $g_{3}(0)<0$ but $\gd_{3}(0)>0$, $g_{3}$ increases from a negative value and reaches zero inside the horizon, so constraint~3 becomes active on an inner arc and $x_{0}$ lies in $\{\}\{3\}\{\}$. At E both $g_{3}(0)$ and $\gd_{3}(0)$ are zero, so the three cases, and the three regions, meet there (Fig.~\ref{fig:end}). The boundaries that start at E are rows 3 and 4 of Table~\ref{tab:bounds}; the remaining boundaries of Fig.~\ref{fig:map}a are row 2 and the mirror images.

\begin{proposition}[the regions at an end]\label{prop:regions}
Let $f_{1}=0$ be the event condition of the boundary being followed and $f_{2}$ the quantity that reaches zero at its end $E$ inside the domain, case (b) or (c) of Table~\ref{tab:ends}. Both are taken negative inside $S$, $f=g$ for a constraint function and $f=-\mu$ for a multiplier. In case (b), $f_{2}=0$ is another boundary of $S$ starting at $E$, and four regions are adjacent to $E$, namely $S$, $S$ with the arc of $f_{1}$, $S$ with the arc of $f_{2}$, and $S$ with both arcs. In case (c), $f_{1}$ and $f_{2}$ are the value and the slope of one constraint at an end of the horizon. Three regions are adjacent to $E$, namely $S$, $S$ with that constraint active from that end, and $S$ with it active inside the horizon, and the three boundaries between them meet at $E$.
\end{proposition}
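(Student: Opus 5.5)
The plan is to argue locally at $E$ by the signs of $f_{1}$ and $f_{2}$, exactly as in the evaporator computation of Step 5, and then to read off which label is optimal in each sign sector from the check of Section~\ref{sec:pmp}. By Lemma~\ref{lem:affine}, for the label $S$ with its switching times held at their values at $E$ (and determined near $E$ by the switching conditions (i), which are regular there when the equations of Theorem~\ref{thm:boundary} are independent), both $f_{1}$ and $f_{2}$ are affine in $x_{0}$ to first order with coefficients continuous in the times. Independence means that their gradients at $E$ are linearly independent. So for $n=2$ the map $x_{0}\mapsto(f_{1},f_{2})$ is a local diffeomorphism near $E$. The four sign combinations of $(f_{1},f_{2})$ then fill four sectors around $E$, and the curves $f_{1}=0$ and $f_{2}=0$ cross transversally there.

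\emph{Case (b).} Here $f_{1}$ and $f_{2}$ are event quantities of two different constraints, or of the same constraint at two separate instants, belonging to disjoint arcs of $S$.
\begin{itemize}
\item In the sector $f_{1}<0,\ f_{2}<0$ the solution of $S$ passes the check, so the point lies in $S$.
\item When $f_{1}>0$, the solution of $S$ violates the inequality attached to $f_{1}$: either $g>0$ somewhere, or $\mu<0$. By Proposition~\ref{prop:switch} the arc of $f_{1}$ must then be inserted. That arc has small length near $E$. Because the two events are separated in time, inserting it perturbs $f_{2}$ only by a quantity that vanishes with the arc length. The sign of $f_{2}$ therefore still decides whether the second arc is needed.
\item This gives the four labels $S$, $S$ with the arc of $f_{1}$, $S$ with the arc of $f_{2}$, and $S$ with both arcs.
\item The curve $f_{2}=0$ satisfies the equation count of Theorem~\ref{thm:boundary}, so it is another boundary of $S$ starting at $E$.
\end{itemize}

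\emph{Case (c).} Here $f_{1}=g_{j}(0)$ and $f_{2}=\gd_{j}(0)$, or the analogues at $t_{f}$ or with $-\mu_{j}$. The argument repeats Step 5.
\begin{itemize}
\item $f_{1}<0,\ f_{2}<0$: the quantity starts negative and decreases, and by continuity it stays negative near the end of the horizon, so the point lies in $S$.
\item $f_{1}>0$: the constraint is violated at the end itself, so it must be active from that end, whatever the sign of $f_{2}$.
\item $f_{1}<0,\ f_{2}>0$: the quantity rises from a negative value. It reaches zero at an interior instant near the end, of order $-f_{1}/f_{2}$, so the extra arc sits inside the horizon.
\end{itemize}
The sectors $f_{1}>0$ with either sign of $f_{2}$ merge into one region, which gives three regions. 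Their pairwise boundaries are the following:
\begin{itemize}
\item the half-line $f_{1}=0,\ f_{2}<0$, which is the boundary being followed;
\item the curve of row-4 type, the touching condition of Proposition~\ref{prop:switch}(c) with $t^{*}\to0$;
\item the curve of row-3 type, on which the inner arc reaches the end of the horizon.
\end{itemize}
The last two emanate from $E$ by the limits of case (d). All three meet at $E$.

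The main obstacle is justifying that only one label is optimal in each sector, and that the inserted arc is the one predicted. A sign violation of $S$ shows that $S$ is not optimal, but not which label is. I would handle this with uniqueness of the optimum (strict convexity, $R>0$) together with continuity of the optimal solution in $x_{0}$. Near $E$ the optimal solution must be close to that of $S$, so any new arc is short and located near the instant where $f_{1}$ or $f_{2}$ vanishes. I would then verify that the candidate label, built from the linear systems of Corollary~\ref{cor:compute}, passes the check to first order in the arc lengths. By uniqueness, that candidate is the optimum.
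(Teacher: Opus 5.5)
You follow the paper's own route: read the labels off the signs of $f_{1}$ and $f_{2}$ along the trajectory of $S$, and insert the arc that Proposition~\ref{prop:switch} calls for. Case (b) goes through essentially as you describe, because there both quantities are nondegenerate at $E$ and refer to separate events. The step that fails is case (c) in the quadrant $f_{1}<0$, $f_{2}>0$.

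At $E$ the value and the slope of $g_{j}$ at the end of the horizon vanish together, so their signs alone no longer decide whether $g_{j}$ reaches zero; the second derivative does. Put $\kappa=-\ddot g_{j}(0)$ at $E$. It cannot be negative, because the trajectory of $S$ from $E$ is feasible, and generically $\kappa>0$. Near $E$, $g_{j}(t)\approx f_{1}+f_{2}t-\tfrac12\kappa t^{2}$, whose maximum over small $t$ is $f_{1}+f_{2}^{2}/(2\kappa)$. So the constraint is reached only if $f_{1}>-f_{2}^{2}/(2\kappa)$. Along every ray from $E$ into that quadrant, the points close to $E$ therefore have a feasible trajectory of $S$, pass the check, and lie in $S$. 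Your crossing time of order $-f_{1}/f_{2}$ is meaningful only when the quadratic term is negligible, i.e.\ when $|f_{1}|\ll f_{2}^{2}/\kappa$. That set is a cusp, not a quadrant.

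Hence the boundary between $S$ and $S$ with the inner arc is not the half-line $f_{2}=0$, $f_{1}<0$ that your sector assignment implies; on that half-line the trajectory of $S$ is feasible. It is the touching curve $f_{1}\approx-\tfrac12\kappa t^{*2}$, $f_{2}\approx\kappa t^{*}$, which lies inside the open quadrant and is tangent to $f_{1}=0$ at $E$. Your list of the three boundaries is right, but it contradicts the sectors you derive it from.

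The evaporator shows this at finite size too. The row-4 point at $t^{*}=2.5$~min has $g_{3}(0)\approx-1.69$, $\gd_{3}(0)\approx1.64$. The row-3 point at $t_{1x}=4$~min has $g_{3}(0)\approx-0.53$, $\gd_{3}(0)\approx1.10$. So all three regions meet inside the one quadrant.

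The conclusion of the proposition survives. Three regions are adjacent to $E$, the inner-arc region is a cusp at $E$ on the side $f_{1}>-f_{2}^{2}/(2\kappa)$ of the touching curve, and the three boundaries meet there tangentially rather than transversally. But this must be argued from the second-order expansion. A check ``to first order in the arc lengths'', as in your last paragraph, cannot detect the effect. The paper's one-line proof, and its discussion in Step~5, make the same sector claim, so this gap is shared with the paper rather than peculiar to your write-up.
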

\begin{proof}
Near $E$ the signs of $f_{1}$ and $f_{2}$ along the trajectory of $S$ divide the initial states into four parts, and by Proposition~\ref{prop:switch} a positive sign inserts the corresponding arc, four labels in case (b). In case (c), with $f_{1}=g(0)$ and $f_{2}=\gd(0)$, $g(0)>0$ makes the constraint active from $t=0$ for either sign of $\gd(0)$, and $g(0)<0$ with $\gd(0)>0$ makes it active inside the horizon, three labels.
\end{proof}

\section{Conclusion}

This paper has shown how to compute one boundary of a critical region exactly in continuous time, without solving an optimal control problem. Once the label and its switching times are fixed, the state and the costate at every instant are linear in the initial state (Lemma~\ref{lem:affine}), so every condition that holds on a boundary is a linear equation in the initial state. These conditions are the switching conditions of the label $S$ and the event condition of the extra arc (Proposition~\ref{prop:switch}), one more than the unknown switching times (Theorem~\ref{thm:boundary}). With no unknown switching time the boundary is a hyperplane, one linear equation. With one, we fix it, solve two linear equations for the corresponding initial state and step it along the horizon, so every point of the curved boundary is exact (Corollary~\ref{cor:compute}). The boundary ends where one more quantity reaches zero (Proposition~\ref{prop:ends}), and the signs of the two quantities at the end identify the adjacent regions (Proposition~\ref{prop:regions}). On the Newell--Lee evaporator this yields the four hyperplanes in closed form and the four curved boundaries by two-by-two linear systems. The procedure that visits every boundary, general state dimension and several unknown switching times are left to the companion paper.

\end{document}